\documentclass[10pt,conference]{ieeeconf}
\usepackage{amsmath,amssymb,latexsym,epsfig,psfrag,graphicx}
\newtheorem{prp}{Proposition}
\newtheorem{thm}[prp]{Theorem}

\newtheorem{lem}[prp]{Lemma}
\newenvironment{pf}{\noindent{\it Proof. }}{\hfill$\Box$\smallbreak}
\newenvironment{pf*}[1]{\smallbreak\noindent{\it #1}}{\hfill$\Box$\smallbreak}
\newcounter{definition}
\newenvironment{dfn}{\addtocounter{definition}{1}\smallbreak\noindent
  {\em Definition \thedefinition.}}{\smallbreak}
\newcounter{remark}

\newcounter{example}

\newenvironment{ex*}[1]{\addtocounter{example}{1}\smallbreak\noindent
	{\bf Example \theexample{} --- {\bf #1}}}{\hfill$\Box$\smallbreak}
\newcommand{\realR}{\mathbb{R}}
\newcommand{\symmetricS}{\mathbb{S}}

\newcommand{\averageE}{\mathbb{E}}

\begin{document}
\title{Minimax optimal dual control --- The single input case}
  \author{Anders Rantzer
  \thanks{The author is affiliated with Automatic Control LTH, Lund
    University, Box 118, SE-221 00 Lund, Sweden. He is a member of the Excellence Center ELLIIT and Wallenberg AI, Autonomous Systems and Software Program (WASP). Support was received from the European Research Council (Advanced Grant 834142) }}
\maketitle

\begin{abstract}
	An explicit solution is derived for the Bellman inequality corresponding to minimax optimal dual control. The minimizing player determines control action as a function of past state measurements and inputs. The maximizing player selects disturbances and model parameters for the underlying linear time-invariant dynamics. The optimal minimizing policy is a dual controller that optimizes the tradeoff between exploration and exploitation. Once sufficient data has been collected, the policy becomes a deterministic certainty equivalence controller. However, when data is insufficient, the policy introduces a randomized term to improve excitation.
\end{abstract}

\section{Introduction}
\label{sec:intro}

The term dual control was introduced by \cite{feldbaum1960dual} to describe the tradeoff between short term control objectives and actions to promote learning. 
This tradeoff is fundamentally important and has been studied extensively \cite{86helmersson+,88bernhardsson,wittenmark1995adaptive,filatov2000survey}. 
Dual control has recently received renewed attention in the context of machine learning. See \cite{mesbah2018stochastic,flayac2021nonlinear}. 

In this paper, the focus is on worst-case models for disturbances and uncertain parameters, as discussed in \cite{cusumano1988nonlinear,sun1987theory,vinnicombe2004examples,2004megretskinonlinear} and more recently in \cite{rantzer2021minimax,cederberg2022synthesis,kjellqvist2022minimax,Rantzer2025acc}. 

The important difference of the current paper compared to previous literature, is that the explicit solution of the Bellman inequality from \cite{rantzer2025minimax} now is extended beyond sign uncertainty in the $B$-matrix and here allows for a norm bounded set of stabilizable systems.

The $A$-matrix is supposed to be known. The reason for this is mainly that it simplifies the formulas for solution of the Bellman inequality. Another reason is that the $B$-matrix is the  central system component of dual control, since unlike the $A$-matrix, $B$ cannot be studied without activation of the input.

\section{Notation}
The set of $n\times m$ matrices with real coefficients is denoted $\realR^{n\times m}$. The transpose of a matrix $M$ is denoted $M^\top$. For a symmetric matrix $M\in\realR^{n\times n}$, we write $M\succ0$ to say that $M$ is positive definite, while $M\succeq0$ means positive semi-definite. The set of all $n\times n$ positive semi-definite matrices is denoted $\symmetricS_+^{n\times n}$. 

For $M, N\in\realR^{n\times m}$, the expression $\langle M,N\rangle$ denotes the trace of $M^\top N$. 
Given $x\in\realR^n$ and $M\in\realR^{n\times n}$, the notation $|x|^2_M$ means $x^\top Mx$. Similarly, given $N\in\realR^{n\times m}$ and $M\in\realR^{n\times n}$, the trace of $N^\top MN$ is denoted $\|N\|^2_M$. For two vectors $x\in\realR^n$, $u\in\realR^m$, the concatenation {\footnotesize$\begin{bmatrix}x\\u\end{bmatrix}$} will generally be denoted as $(x,u)$. 

\medskip

\begin{dfn}\label{dfn:1}
	{\it For $A\in\realR^{n\times n}$, $S\in\symmetricS_+^{n\times n}$ and $\beta,R,\gamma\in\realR_+$, let $\mathcal{B}$ denote the set of all $B\in\realR^n$ such that
	{\small\begin{align*}
		\begin{cases}
			\min_u\max_w\left(|x|^2_S+|u|^2_R-\gamma^2|w|^2+|Ax+Bu+w|^2\right)\le|x|^2\\
			1\le|B|\le \beta.
		\end{cases}
	\end{align*}
	}Moreover, for  $Z\in\symmetricS_+^{(2n+1)\times(2n+1)}$, define
	{\begin{align*}
		z_B&:=\gamma^2\big\|\begin{bmatrix}A&B&-I\end{bmatrix}^\top\big\|^2_{Z}\\[1mm]
		\tilde{z}_B&:=(z_{-B}-z_B)/2\\[1mm]
		\bar{z}&:=\min_{B\in\mathcal{B}}(z_B+z_{-B})/2
	\end{align*}
	}}
\end{dfn}

\medskip

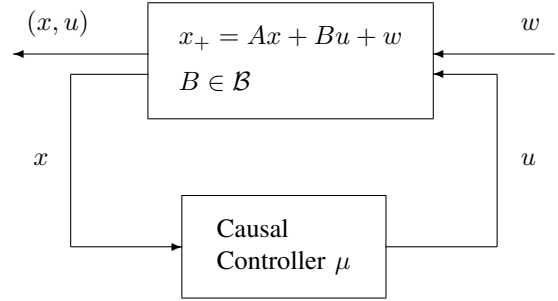
\begin{figure}
	\begin{center}
		\setlength{\unitlength}{.009mm}%
		\begin{picture}(9366,4600)(1318,-8461)
			\put(4501,-8461){\framebox(3000,1500){\begin{tabular}{ll}Causal\\ Controller $\mu$
			\end{tabular}}}
			\put(4000,-5811){\framebox(4200,1700){\begin{tabular}{l}
						$x_+=Ax+Bu+w$\\[2mm]
						$B\in\mathcal{B}$
			\end{tabular}}}
			\put(4000,-5161){\line(-1, 0){1150}}
			\put(2851,-5161){\line( 0,-1){2550}}
			\put(2851,-7711){\vector( 1, 0){1650}}
			\put(4000,-4861){\vector(-1, 0){2000}}
			\put(10001,-4861){\vector(-1, 0){1800}}
			\put(7501,-7711){\line( 1, 0){1650}}
			\put(9151,-7711){\line( 0, 1){2550}}
			\put(9151,-5161){\vector(-1, 0){950}}
			\put(2300,-6500){$x$}%
			\put(9500,-6500){$u$}%
			\put(2200,-4500){$(x,u)$}%
			\put(9500,-4500){$w$}%
		\end{picture}%
	\end{center}
	\caption{We want a feedback controller that works for all $B$-vectors within given bounds. If $A$ is unstable and $\mathcal{B}$ includes sign uncertainty, even stabilization is impossible using linear time-invariant controllers. A randomized nonlinear dual controller can do much better by estimating $B$ and use the estimate for control. A guaranteed bound is derived for the nonlinear gain from $|w|^2$ to the expected value of $|x|^2_S+|u|^2_R$.
	}
	\label{fig:blockdiag}
\end{figure}

\begin{thm}[Main result]
	{\it Given $A, S, R,\beta,\gamma$ and notation of Definition~1, suppose that 
	{\begin{align}
		\gamma^2&\ge
		\left(1+\frac{2\|A\|^2}{1-\gamma^{-2}}\right)\max\{1+2R+\beta^2,\|S^{-1}\|\}.
		\label{eqn:tau-gamma}
	\end{align} 	
	}Then there exists a feedback law $u=\mu(x,Z)$ generating (possibly random) inputs $u$ such that
	{\begin{align*}
				\averageE\sum_{t=0}^{T}\left(|x_t|_S^2+|u_t|_R^2\right)
				\le\gamma^2\sum_{t=0}^{T}|w_t|^2
	\end{align*}
	}whenever $B\in\mathcal{B}$ and
	\begin{align*}
		x_{t+1}&=Ax_t+Bu_t+w_t, \quad x_0=0,\\
		u_t&=\mu\left(x_t,\sum_{\tau=0}^{t-1}(x_\tau,u_\tau,x_{\tau+1})(x_\tau,u_\tau,x_{\tau+1})^\top\right).
	\end{align*}
	In particular, the inequality holds for the feedback law
	{\begin{align}
			&\;\;\mu(x,Z):=-\hat{K}x
			\qquad\qquad\quad\;\,\hbox{if $\tilde{z}_{\hat{B}}
			\ge\frac{2\hat{K}x\hat{B}^\top Ax}{1-\gamma^{-2}}$}
			\label{eqn:exploit}\\[2mm]
			&\begin{cases}
				\averageE\mu(x,Z):=\frac{\gamma^{-2}-1}{2\hat{B}^\top Ax}\,\tilde{z}_{\hat{B}}
				\\[2mm]
				\averageE[\mu(x,Z)^2]:=|\hat{K}x|^2
			\end{cases}\quad\text{otherwise, }
			\label{eqn:explore}
	\end{align}
	}where 
	{\small\begin{align}
		\hat{K}&:=\left((1-\gamma^{-2})R+|\hat{B}|^2\right)^{-1}\hat{B}^\top A\notag\\
		\hat{B}&:=
		\arg\max_{B\in\mathcal{B}}\max
		\bigg\{\min_u\left(|u|^2_R+\frac{|Ax+Bu|^2}{1-\gamma^{-2}}-z_B\right),\label{eqn:Bhat}\\
		&\qquad\frac{|Ax|^2+\left[(1-\gamma^{-2})R+|B|^2\right]^{-1}(B^\top Ax)^2}{1-\gamma^{-2}}-\frac{z_B+z_{-B}}{2}\bigg\}
		\notag
	\end{align}}}
	\label{thm:Malpha}
\end{thm}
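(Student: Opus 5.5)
The plan is to exhibit an explicit storage function and verify a Bellman inequality in expectation. With $Z_t=\sum_{\tau<t}(x_\tau,u_\tau,x_{\tau+1})(x_\tau,u_\tau,x_{\tau+1})^\top$ I will use the candidate value function
\begin{align*}
V(x,Z):=\max_{B\in\mathcal{B}}\max\Big\{&\min_u\Big(|u|^2_R+\tfrac{|Ax+Bu|^2}{1-\gamma^{-2}}-z_B\Big),\\
&\tfrac{|Ax|^2+[(1-\gamma^{-2})R+|B|^2]^{-1}(B^\top Ax)^2}{1-\gamma^{-2}}-\tfrac{z_B+z_{-B}}{2}\Big\}+|x|^2 ,
\end{align*}
or a minor shift of it, so that $\hat B$ in (\ref{eqn:Bhat}) is its maximizer. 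The key observation is that $z_B$ is affine in $Z$. Writing $Z_{t+1}=Z_t+(x,u,x_+)(x,u,x_+)^\top$, the increment is $\gamma^2|Ax+Bu-x_+|^2=\gamma^2|w|^2$ whenever $B$ is the true parameter. Hence $-z_B$ plays the role of $-\gamma^2\sum|w|^2$, and a telescoping argument reduces the theorem to the one-step inequality
\[
\averageE_u\,\big[\,|x|^2_S+|u|^2_R+V(x_+,Z_+)\,\big]\le V(x,Z)
\]
for all $x,Z,w$, together with $V(0,0)\le 0$ and $V\ge -\gamma^2\sum|w|^2$-type lower bounds along trajectories. The first and third of these follow from $\min\max$ in Definition~\ref{dfn:1} applied to the true $B\in\mathcal{B}$ at $x_0=0$.

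For the one-step inequality I fix the maximizing $B$ in $V(x_+,Z_+)$ and then maximize over $w$. This is a concave quadratic in $w$ because $\gamma^2>1$. The $-\gamma^2|w|^2$ term combines with $|x_+|^2$ and the $|Ax_+ + Bu_+|^2/(1-\gamma^{-2})$ terms. The defining inequality of $\mathcal{B}$, namely that the min over $u$ of the max over $w$ is at most $|x|^2$, is what absorbs the next-step quadratic in $x_+$. Here the norm bounds $1\le|B|\le\beta$ and assumption (\ref{eqn:tau-gamma}) are used to dominate cross terms like $2\|A\|^2/(1-\gamma^{-2})$ and $1+2R+\beta^2$. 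After completing the square in $w$, what remains is an expression in $u$ of the form $|u|^2_R+|Ax+Bu|^2/(1-\gamma^{-2})-z_B$, plus $|x|^2_S$, which must be bounded by $V(x,Z)$.

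The split into exploitation and exploration then enters as follows. Because $u$ appears only through $u$ and $u^2$ (single input), the expectation depends only on $\averageE u$ and $\averageE u^2$. Consider $z_B$ versus $z_{-B}$, or equivalently $\tilde z_B$. Since $V$ takes the maximum over $\pm B$ implicitly through the symmetric term, I will show that the worst case over the pair $\{\hat B,-\hat B\}$ is linear in $\averageE u$ and in $\averageE u^2$.

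In case (\ref{eqn:exploit}), the deterministic certainty-equivalence input $-\hat Kx$ minimizes $|u|^2_R+|Ax+\hat Bu|^2/(1-\gamma^{-2})$. The threshold condition on $\tilde z_{\hat B}$ guarantees that the $-\hat B$ alternative is not worse, which recovers the first branch of $V$.

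In case (\ref{eqn:explore}), I choose $\averageE u$ to balance the $\hat B$ and $-\hat B$ terms exactly, which is the formula with $\tilde z_{\hat B}$. I keep $\averageE u^2=|\hat Kx|^2$, which is feasible by Jensen precisely when the exploit test fails. Averaging the two then yields the second branch of $V$, the one with $(z_B+z_{-B})/2$.

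I expect the main obstacle to be the verification that the fixed $\hat B$ chosen at time $t$ handles every $B\in\mathcal{B}$ maximizing at time $t+1$. That is, I must show that the $\max_B$ inside $V(x_+,Z_+)$ can be exchanged with the expectation over $u$. My first attempt is to use the fact that after the $w$-maximization the dependence on $B$ enters only through $z_B$, which is affine in $Z$ and quadratic in $B$, together with the inequality $\bar z\le (z_B+z_{-B})/2$, and to bound the mismatch using condition (\ref{eqn:tau-gamma}).
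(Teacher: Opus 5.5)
Your overall architecture matches the paper: telescoping, reading $z_B$ at the true $B$ as $\gamma^2\sum|w_t|^2$, noting that the one-step cost depends on $u$ only through $\averageE u$ and $\averageE u^2$, and balancing $\pm\hat B$ when exploring. There is, however, a genuine gap in the storage function.

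Your $V$ is $|x|^2$ plus the objective in \eqref{eqn:Bhat}. That objective is essentially $\max_B\min_u\max_{i=\pm1}\big(\mathcal{F}_uV_{iB}(x,Z)-|x|_S^2\big)$. In other words, it is the \emph{one-step image} of the simple branches $V_B(x,Z)=|x|^2-z_B$, not a storage function in its own right.

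Your one-step computation then silently uses those simple branches at time $t+1$. Only for $|x_+|^2-z_B(Z_+)$ does $\gamma^2>1$ guarantee concavity in $w$, and only then does completing the square leave $|Ax+Bu|^2/(1-\gamma^{-2})$. With your own $V$ at time $t+1$, the weight on $x_+$ is larger:
\begin{itemize}
\item in the first branch it is $M=I+P_B$, with $P_B=(1-\gamma^{-2})^{-1}A^\top\big(I-B((1-\gamma^{-2})R+|B|^2)^{-1}B^\top\big)A$;
\item in the second branch it is up to $\tau I$, with $\tau=1+2\|A\|^2/(1-\gamma^{-2})$.
\end{itemize}
Maximizing over $x_+$ then gives $|Ax+Bu|^2_{M(I-\gamma^{-2}M)^{-1}}$, which is in general strictly larger. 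The comparison with $V(x,Z)$ then no longer follows from the defining inequality of $\mathcal{B}$. As written, the argument uses one function at time $t$ and another at time $t+1$, so the one-step inequality for your $V$ is never established.

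The paper avoids this by taking $\hat V=\max_{B\in\mathcal{B}}\max\{|x|^2-z_B,\ \tau|x|^2-\bar z\}$. In the exploitation case the one-step image closes in the first branch via $\hat B\in\mathcal{B}$. In the exploration case it is bounded by the $B$-independent branch $\tau|x|^2-\bar z$, and that branch must itself be propagated; your plan has no counterpart to this step.

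The propagation works as follows. $(z_B+z_{-B})/2$ increases by $\gamma^2|Ax-v|^2+\gamma^2|B|^2u^2\ge\gamma^2|Ax-v|^2+\gamma^2u^2$, which is where $|B|\ge1$ is used. So the injected energy $\averageE u^2=|\hat Kx|^2$ is repaid as information. Together with $\hat B\in\mathcal{B}$, this closes under $\gamma^2S\succeq\tau I$ and $\gamma^2\tau^{-1}\ge1+2R+\beta^2$. These are exactly the two terms of \eqref{eqn:tau-gamma}; they are not there to ``dominate cross terms.''

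The exchange of $\max_B$ with the choice of $u$, which you leave as the main obstacle, is handled in the paper by a minimax step. With $\averageE u^2$ fixed, the cost is linear in $\averageE u$ and in the $\pm B$ mixture $(\mathbf M,\mathbf N)$. Von Neumann's theorem then turns $\min_u\max_B$ into $\max_B\min_u\max_{i=\pm1}$, and $\hat B$ is by definition the outer maximizer. Your proposed workaround via $\bar z\le(z_B+z_{-B})/2$ does not address this exchange.

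A minor point: failure of \eqref{eqn:exploit} gives only one side of $|\averageE u|\le|\hat Kx|$. The other side comes from $\hat B$ being the argmax over the symmetric set $\mathcal{B}$.
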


\medskip

The proof of Theorem~\ref{thm:Malpha} will be based on two preliminary results, Lemma~\ref{lem:Bset} and Theorem~\ref{thm:Bellman}. The first one shows that $\mathcal{B}$ is the union of two (convex) second order cones:

\medskip

\begin{lem}
	{\it Given $R,S,\gamma$ and $A$, let $\lambda_1\ge\ldots\ge\lambda_n$ be the eigenvalues of $A^\top A-(1-\gamma^{-2})(I-S)$, with corresponding eigenvectors $U_1,\ldots,U_n$. Then $\mathcal{B}$ has the form
		\begin{align*}
			\mathcal{B}=\left\{B\in\mathcal{B}_+\cup(-\mathcal{B}_+)
			: 1\le|B|\le \beta\right\},
		\end{align*} 
		where 
		\begin{align*}
			\begin{cases}
				\mathcal{B}_+=\emptyset\quad\;\;\;\text{if }\lambda_2>0\\
				\mathcal{B}_+=\realR^n\quad\text{if }\lambda_1\le0, \quad\text{else}\\
				\mathcal{B}_+=
				\left\{B\;:\;B^\top AU_1\ge\sqrt{\lambda_1(B^\top B+(1-\gamma^{-2})R)}\right\}.
			\end{cases}
		\end{align*}
	}
	\label{lem:Bset}
\end{lem}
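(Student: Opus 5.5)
The plan is to evaluate the min--max in Definition~1 in closed form, which turns membership in $\mathcal{B}$ into a matrix inequality, and then to analyse that inequality through the eigen-decomposition of $M:=A^\top A-(1-\gamma^{-2})(I-S)$. Throughout I assume $\gamma>1$, as implied by (\ref{eqn:tau-gamma}), and write $c:=1-\gamma^{-2}\in(0,1)$.

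\emph{Step 1 (maximisation over $w$).} Fix $x,u$ and set $v=Ax+Bu$. The function $w\mapsto -\gamma^2|w|^2+|v+w|^2$ is strictly concave. Its maximiser is $w=v/(\gamma^2-1)$, and the maximal value is $|v|^2/c$. The first condition in Definition~1 therefore becomes
\[
\min_u\Big(|x|_S^2+R u^2+\tfrac{1}{c}|Ax+Bu|^2\Big)\le |x|^2\quad\forall x .
\]

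\emph{Step 2 (minimisation over $u$).} Multiply by $c$ and minimise the scalar quadratic in $u$. This gives $c R u^2+|Ax+Bu|^2\ge |Ax|^2-(B^\top Ax)^2/(|B|^2+cR)$, with equality at the minimiser. Put $a:=A^\top B/\sqrt{|B|^2+cR}$. Membership in $\mathcal{B}$, apart from the norm bounds $1\le|B|\le\beta$, is then equivalent to
\[
x^\top M x\le (a^\top x)^2\ \ \forall x,\qquad\text{i.e.}\qquad M\preceq aa^\top .
\]
This condition is invariant under $B\mapsto -B$. That symmetry is what produces the union $\mathcal{B}_+\cup(-\mathcal{B}_+)$ in the statement.

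\emph{Step 3 (case analysis on the spectrum).}
\begin{itemize}
\item If $\lambda_1\le 0$, then $M\preceq0\preceq aa^\top$, so every $B$ qualifies and $\mathcal{B}_+=\realR^n$.
\item If $\lambda_2>0$, the plane $\mathrm{span}\{U_1,U_2\}$ contains a nonzero $x\perp a$. For that $x$ we get $x^\top Mx\ge\lambda_2|x|^2>0=(a^\top x)^2$, so no $B$ qualifies and $\mathcal{B}_+=\emptyset$.
\item If $\lambda_1>0\ge\lambda_2$, write $M=\lambda_1U_1U_1^\top-N$ with $N\succeq0$ and $NU_1=0$. Testing $M\preceq aa^\top$ at $x=U_1$ gives the necessary condition $(a^\top U_1)^2\ge\lambda_1$. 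Multiplying by $|B|^2+cR$ and fixing the sign of $B^\top AU_1$ yields exactly the inequality defining $\mathcal{B}_+$; the opposite sign yields $-\mathcal{B}_+$.
\end{itemize}

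\emph{Main obstacle: sufficiency in the last case.} The hard step is showing that $(a^\top U_1)^2\ge\lambda_1$ also \emph{implies} $M\preceq aa^\top$. The components of $a$ orthogonal to $U_1$ enter through the Schur complement. With $a=\alpha U_1+a_\perp$, I would restrict to directions where $N$ is positive definite. There $M\preceq aa^\top$ is equivalent to
\[
\lambda_1\le \alpha^2/\big(1+a_\perp^\top N^{-1}a_\perp\big)
\]
(with the obvious modification on $\ker N$). This is stronger than $\alpha^2\ge\lambda_1$ whenever $a_\perp\ne0$. A $2\times2$ check with $M=\mathrm{diag}(1,-1)$ confirms the discrepancy: it gives $a_1^2\ge 1+a_2^2$, not $a_1^2\ge1$.

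So the first thing I would try is to recover the stated form from the structure of the problem. One possibility is that $a_\perp$ vanishes or is negligible under the standing hypothesis (\ref{eqn:tau-gamma}), or in the scalar-input setting the paper targets. If that fails, the correct description of $\mathcal{B}_+$ is the Schur-complement condition above. That condition is still a second-order-cone constraint in $B$, so the qualitative conclusion of the lemma ($\mathcal{B}$ is a union of two convex cones intersected with the annulus $1\le|B|\le\beta$) survives, even though the explicit formula would need this correction.
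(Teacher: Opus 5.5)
Your Steps~1--2 and the two extreme cases ($\lambda_1\le0$, $\lambda_2>0$) coincide with the paper's appendix argument. The divergence is in the case $\lambda_1>0\ge\lambda_2$. There the paper simply asserts that, once $\lambda_2\le0$, the inequality $aa^\top\succeq M$ is \emph{equivalent} to its compression $(a^\top U_1)^2\ge\lambda_1$ onto $U_1$. That is exactly the implication you single out, and your objection is correct: only necessity holds, so the third case of the lemma is false as stated, not merely unproved.

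Your $2\times2$ check lives at the level of $(M,a)$, but it is realized by genuine data. Take $\gamma^2=2$, $S=0$, $R=0$ (any $0<R\le\tfrac47$ works too), $A=\mathrm{diag}(2,\tfrac12)$, $B=(3,1)^\top$ and $\beta\ge\sqrt{10}$. Then $M=\mathrm{diag}(\tfrac72,-\tfrac14)$, $U_1=e_1$, and $B^\top AU_1=6\ge\sqrt{35}=\sqrt{\lambda_1|B|^2}$, so the lemma places $B$ in $\mathcal{B}$. Yet for $x=(2,-\tfrac32)^\top$,
\[
\min_u\max_w\left(|x|^2_S+|u|^2_R-\gamma^2|w|^2+|Ax+Bu+w|^2\right)=2\Big(|Ax|^2-\frac{(B^\top Ax)^2}{|B|^2}\Big)=\frac{125}{16}>\frac{25}{4}=|x|^2,
\]
so $B\notin\mathcal{B}$. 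Your Schur-complement test catches this, since it demands $36\ge\tfrac72(10+1)$.

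So drop the attempt to rescue the stated formula. The lemma is already the single-input case ($B\in\realR^n$), and it does not assume \eqref{eqn:tau-gamma}. That hypothesis would not help anyway: it only asks for $\gamma$ large, and the gap persists as $\gamma\to\infty$. For example, $A=\mathrm{diag}(1,\tfrac12)$, $S=\tfrac12I$, $R=0$, $\beta=2$, $\gamma^2=100$ satisfy \eqref{eqn:tau-gamma}. Then $B=(\sqrt{0.6},\sqrt{0.4})^\top$ passes the lemma's test, since $(B^\top AU_1)^2=0.6\ge0.505=\lambda_1|B|^2$, but it violates $aa^\top\succeq M$.

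The right conclusion is your corrected condition. In the paper's notation, for $\lambda_2<0$ it reads
\[
B^\top AU_1\ge\sqrt{\lambda_1\Big(B^\top B+(1-\gamma^{-2})R+\sum_{k\ge2}\frac{(B^\top AU_k)^2}{|\lambda_k|}\Big)}.
\]
Equivalently, with the sign of $B^\top AU_1$ fixed, $B^\top AM^{-1}A^\top B\ge|B|^2+(1-\gamma^{-2})R$. For $\lambda_2=0$, add the linear constraints $B^\top AU_k=0$ for all $k\ge2$ with $\lambda_k=0$. This is still a convex second-order-cone constraint, so the qualitative picture (two convex sets intersected with the annulus $1\le|B|\le\beta$) survives. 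The stated cone omits the sum, however, and therefore overestimates $\mathcal{B}$ in general. It is exact when $\mathrm{range}(A^\top)\subseteq\mathrm{span}(U_1)$, e.g.\ $n=1$.
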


\medskip

A proof of Lemma~\ref{lem:Bset} is given in the appendix.

\medskip

\begin{thm}[Solution to the Bellman inequality]
	{\it Given Definition~1, let $\tau:=1+2\|A\|^2(1-\gamma^{-2})^{-1}$ and 
	\begin{align*}
		\hat{V}(x,Z)&:=\max_{B\in\mathcal{B}}\max
		\left\{\averageE(|x|^2-z_B),\averageE(\tau|x|^2-\bar{z})\right\}.
	\end{align*}
	 Assume that \eqref{eqn:tau-gamma} holds. Then, for all $x,Z$,
	\begin{align}
		\!\!\!\!\hbox{\footnotesize$\hat{V}(x,Z)
		\ge\max_v\left[\averageE\left(|x|^2_S+|\bar{u}|^2_R\right)+\hat{V}\left(v,Z+\begin{bmatrix}x\\\bar{u}\\v\end{bmatrix}
			\begin{bmatrix}x\\\bar{u}\\v\end{bmatrix}^\top
		\right)\right]$}\label{eqn:Bellman}
	\end{align}
	when $\bar{u}=\mu(x,Z)$ and $\mu$ is defined by \eqref{eqn:exploit}-\eqref{eqn:explore}.}
\label{thm:Bellman}
\end{thm}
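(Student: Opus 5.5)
The plan is to reduce \eqref{eqn:Bellman} to a finite collection of scalar inequalities by computing the effect of the data update on each $z_B$ explicitly. The key identity is
\[
z_B\Big(Z+(x,\bar u,v)(x,\bar u,v)^\top\Big)=z_B(Z)+\gamma^2|Ax+B\bar u-v|^2 .
\]
With it, every candidate term on the right-hand side of \eqref{eqn:Bellman}, indexed by $B\in\mathcal{B}$ and by the branch of $\hat V$, becomes an explicit quadratic in $v$ and $\bar u$. I read $\hat V$ as "maximize over $B$ (and over $v$, allowed to depend on the realized $\bar u$), then average over the randomization of $\mu$." Under this reading it suffices to show, for each fixed $B\in\mathcal{B}$, that each branch evaluated at $(v,Z_+)$, plus the stage cost, is bounded by $\max_{B'\in\mathcal{B}}\max\{|x|^2-z_{B'},\,\tau|x|^2-\bar z\}$. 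Throughout I use that $\mathcal{B}=-\mathcal{B}$, which follows from Lemma~\ref{lem:Bset}. This symmetry makes $\bar z$ and $\tilde z_B$ well defined and lets the two sign hypotheses $\pm B$ be paired.

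\emph{First branch.} For fixed $B$ and $\bar u$, maximize over $v$. Using $\max_v\big(|v|^2-\gamma^2|c-v|^2\big)=|c|^2/(1-\gamma^{-2})$, the term becomes
\[
|x|_S^2+|\bar u|_R^2+\frac{|Ax+B\bar u|^2}{1-\gamma^{-2}}-z_B .
\]
Taking expectations, only $\averageE\bar u$ and $\averageE\bar u^2$ enter:
\[
|x|^2_S+\frac{|Ax|^2}{1-\gamma^{-2}}+\frac{2B^\top Ax\,\averageE\bar u}{1-\gamma^{-2}}+\Big(R+\frac{|B|^2}{1-\gamma^{-2}}\Big)\averageE\bar u^2-z_B .
\]
This expression is exactly why $\mu$ is specified only through its first two moments.

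\emph{Exploitation case \eqref{eqn:exploit}.} Here $\bar u=-\hat Kx$ is deterministic. For $B=\pm\hat B$ the expression is the inner minimum in \eqref{eqn:Bhat}, possibly plus a cross term. That cross term is controlled by the condition $\tilde z_{\hat B}\ge 2\hat Kx\hat B^\top Ax/(1-\gamma^{-2})$, which lets the hypothesis $-\hat B$ be traded against $\hat B$. For general $B$ I use the maximality of $\hat B$ in \eqref{eqn:Bhat}, together with the defining inequality of $\mathcal{B}$ ($\min_u\max_w(\cdots)\le|x|^2$). This bounds the terms by $|x|^2-z_{B'}$ or by $\tau|x|^2-\bar z$. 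The constant $\tau$ is chosen so that $\frac{|Ax|^2+[\cdots]^{-1}(B^\top Ax)^2}{1-\gamma^{-2}}\le(\tau-1)|x|^2+\dots$, using $|\hat K x|\lesssim\|A\||x|$.

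\emph{Exploration case \eqref{eqn:explore}.} Here $\averageE\bar u$ is chosen so that the linear term $2B^\top Ax\,\averageE\bar u/(1-\gamma^{-2})$ exactly offsets $\tilde z_{\hat B}$ when the hypotheses $\hat B$ and $-\hat B$ are averaged. As a result, $z_{\pm\hat B}$ effectively get replaced by $(z_{\hat B}+z_{-\hat B})/2\ge\bar z$. The second moment $|\hat Kx|^2$ then gives the same quadratic value as the certainty-equivalence minimizer, which matches the second entry in \eqref{eqn:Bhat}.

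\emph{Second branch.} This is the step I expect to be the main obstacle. Its updated value $\bar z_+$ is a minimum over $B$ of the average of $z_{B}+\gamma^2|Ax+B\bar u-v|^2$ and $z_{-B}+\gamma^2|Ax-B\bar u-v|^2$, so it does not decouple in $B$. My first attempt is to lower bound $\bar z_+\ge\bar z+\frac{\gamma^2}{2}\min_B\big(|Ax+B\bar u-v|^2+|Ax-B\bar u-v|^2\big)\ge\bar z+\gamma^2|Ax-v|^2$, which follows from the parallelogram law. Maximizing $\tau|v|^2-\gamma^2|Ax-v|^2$ over $v$ then gives $\tau\gamma^2|Ax|^2/(\gamma^2-\tau)$. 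After adding the stage cost $|x|_S^2+R\,\averageE\bar u^2\le(\|S\|+R\beta^2\|A\|^2\cdots)|x|^2$, this must be at most $\tau|x|^2$. Hypothesis \eqref{eqn:tau-gamma}, which requires $\gamma^2\ge\tau\max\{1+2R+\beta^2,\|S^{-1}\|\}$, is precisely what should make this final scalar inequality hold. If the crude bound is too lossy, the fallback is to compare against the first branch $|x|^2-z_{B'}$ instead, using $\|S^{-1}\|\le\gamma^2/\tau$.
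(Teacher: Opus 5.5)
The first branch ($|v|^2-z_B$) follows the paper in outline. You maximize over $v$, note that only $\averageE\bar u$ and $\averageE\bar u^2$ enter, trade $-\hat B$ against $\hat B$ when exploiting, and cancel the cross term with the mean when exploring. The paper makes ``maximality of $\hat B$'' usable through a von Neumann exchange of $\min_u$ and $\max$ over convex combinations of $(\mathbf M(\pm B),\mathbf N(\pm B))$.

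\textbf{The gap is in the second branch: you discard exactly the term that makes it work.} The parallelogram law gives
\[
\tfrac12\big(|Ax+B\bar u-v|^2+|Ax-B\bar u-v|^2\big)=|Ax-v|^2+|B|^2\bar u^2 ,
\]
and you keep only $|Ax-v|^2$. You are then left with
\[
|x|_S^2+R\,\averageE\bar u^2+\frac{\tau\gamma^2}{\gamma^2-\tau}|Ax|^2\le\tau|x|^2 ,
\]
which is false whenever $\|A\|\ge1$. For $x\neq0$ with $|Ax|\ge|x|$, the third term alone is at least $\tau|x|^2$. Moreover $|x|_S^2>0$, because \eqref{eqn:tau-gamma} forces $\gamma^2S\succeq\tau I$. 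So \eqref{eqn:tau-gamma} cannot rescue this inequality.

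Your fallback does not help either. At $Z=0$ every $z_B$ vanishes, so $\hat V(x,0)=\tau|x|^2$ is already the largest right-hand side available. Bounding by $|x|^2-z_{B'}=|x|^2$ is strictly harder.

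\textbf{The missing idea is that this branch rewards excitation.} Since $|B|\ge1$ on $\mathcal{B}$ (this is what the lower bound in Definition~1 is for), you have pointwise
\[
\bar z_+\ge\bar z+\gamma^2|Ax-v|^2+\gamma^2\bar u^2 ,
\]
and $\averageE\bar u^2=|\hat Kx|^2$ in both branches of $\mu$. After maximizing over $v$ one needs
\[
|x|_S^2+|\hat Kx|_R^2+\frac{|Ax|^2}{\tau^{-1}-\gamma^{-2}}-\gamma^2|\hat Kx|^2\le\tau|x|^2 .
\]
The paper derives this from the inequality supplied by $\hat B\in\mathcal{B}$, namely $(1-\gamma^{-2})|x|^2_{I-S}\ge|Ax|^2-\big((1-\gamma^{-2})R+|\hat B|^2\big)|\hat Kx|^2$. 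The $|x|^2$ terms are compared via $(\tau^{-1}-\gamma^{-2})(\tau I-S)-(1-\gamma^{-2})(I-S)=\gamma^{-2}(1-\tau^{-1})(\gamma^2S-\tau I)\succeq0$. The $|\hat Kx|^2$ terms are compared via $(\tau^{-1}-\gamma^{-2})(\gamma^2-R)\ge(1-\gamma^{-2})R+\beta^2$, which follows from $\gamma^2\tau^{-1}\ge1+2R+\beta^2$. This is how the two halves of \eqref{eqn:tau-gamma} are actually used.

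\textbf{Your stated reading of $\hat V$ is also inconsistent with your own argument.} You read it as maximizing over $B$ after seeing the realized $\bar u$, then averaging; that is not the paper's reading. Under it, the reduction to a fixed $B$ is invalid. An adversary who picks the sign of $B$ after the realization turns the cross term into an absolute value that randomization cannot cancel. Only $v$ may depend on the realization; $\max_B$ sits outside $\averageE$.
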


\medskip

\begin{pf}
	For random $x,Z$ and $B\in\mathcal{B}$, let
	\begin{align*}
	\bar{V}(x,Z)&:=\averageE\left(\tau|x|^2-\bar{z}\right)\\
	V_B(x,Z)&:=\averageE\left(|x|^2-z_B\right).
	\end{align*}
	Given a random $u\in\realR$, define the operator $\mathcal{F}_u$ by
	{\small\begin{align*}
			\mathcal{F}_uV(x,Z)&:=
			\max_v\left[\averageE\left(|x|^2_S+|u|^2_R\right)+V\left(v,Z+\begin{bmatrix}x\\u\\v\end{bmatrix}
			\begin{bmatrix}x\\u\\v\end{bmatrix}^\top
			\right)\right].
		\end{align*}
	}Then
	{\small\begin{align*}
		&\mathcal{F}_u\bar{V}(x,Z)\\
		&=\max_v\averageE\left(|x|^2_S+|u|^2_R+\tau|v|^2-\gamma^2|Ax-v|^2-\gamma^2|Bu|^2-\bar{z}\right)\\
		&\le\averageE\left(|x|^2_S+|u|^2_R+\frac{|Ax|^2}{\tau^{-1}-\gamma^{-2}}-\gamma^2u^2-\bar{z}\right)\\[2mm]
		&\mathcal{F}_uV_B(x,Z)\\
		&=\max_v\averageE\left(|x|^2_S+|u|^2_R+|v|^2-\gamma^2|Ax+Bu-v|^2-z_B\right)\\
		&=\averageE\left(|x|^2_S+|u|^2_R+(1-\gamma^{-2})^{-1}|Ax+Bu|^2-z_B\right).
	\end{align*}
	}To prove \eqref{eqn:Bellman}, it is sufficient to verify that $\bar{u}=\mu(x,Z)$ gives
	\begin{align}
	\max_{B\in\mathcal{B}}\max\left\{\mathcal{F}_{\bar{u}}\bar{V}(x,Z),\mathcal{F}_{\bar{u}}V_B(x,Z)\right\}
	&\le \hat{V}(x,Z).
	\label{eqn:minthree}
	\end{align}
	We will first prove that $\mathcal{F}_{\bar{u}}\bar{V}(x,Z)\le\tau|x|^2-\bar{z}$, or more specifically that
	\begin{align*}
	|x|^2_S+|\hat{K}x|^2_R+\frac{|Ax|^2}{\tau^{-1}-\gamma^{-2}}-\gamma^2|\hat{K}x|^2&\le\tau|x|^2
	\end{align*}
	for all $x$. The condition $\hat{B}\in\mathcal{B}$ gives
	{\small\begin{align*}
		|x|^2&\ge\min_u\max_w\left(|x|^2_S+|u|^2_R-\gamma^2|w|^2+|Ax+\hat{B}u+w|^2\right)\\
		&=\min_u\left(|x|^2_S+\frac{(1-\gamma^{-2})Ru^2+|Ax+\hat{B}u|^2}{1-\gamma^{-2}}\right)\\
		&=|x|^2_S
		+\frac{|Ax|^2-((1-\gamma^{-2})R+|\hat{B}|^2)|\hat{K}x|^2}{1-\gamma^{-2}}.
	\end{align*}
	}Hence, we need to prove that the desired inequality
	\begin{align*}
	(\tau^{-1}-\gamma^{-2})|x|_{(\tau I-S)}^2
	&\ge|Ax|^2-(\tau^{-1}-\gamma^{-2})(\gamma^2-R)|\hat{K}x|^2
	\end{align*}
	follows from the known inequality
	\begin{align*}
	(1-\gamma^{-2})|x|_{(I-S)}^2
	&\ge |Ax|^2-((1-\gamma^{-2})R+|\hat{B}|^2)|\hat{K}x|^2.
	\end{align*}
	Comparing the left hand sides, we have 
	\begin{align*}
	&(\tau^{-1}-\gamma^{-2})(\tau I-S)-(1-\gamma^{-2})(I-S)\\
	&=\gamma^{-2}(1-\tau^{-1})(\gamma^2S-\tau I)\succeq0,
	\end{align*}
	since $\tau\ge1$ and $\gamma^2S\succeq\tau I$. Hence the left side of the desired inequality is bounded below by the left side of the known inequality. For the right hand sides, it remains to verify that
	\begin{align*}
	(\tau^{-1}-\gamma^{-2})(\gamma^2-R)
	&\ge(1-\gamma^{-2})R+\beta^2
	\end{align*}
	or the slightly stronger condition 
	\begin{align*}
	\gamma^2\tau^{-1}&\ge1+2R+\beta^2.
	\end{align*}
	This follows from \eqref{eqn:tau-gamma}, so the implication is proved and the inequality $\mathcal{F}_{\bar{u}}\bar{V}(x,Z)\le\tau|x|^2-\bar{z}$ holds.
	
	\medskip 
	
	It remains to prove that $\mathcal{F}_{\bar{u}}V_B(x,Z)\le \hat{V}(x,Z)$. For this, some notation will be needed. Put
	\begin{align*}
		\mathbf M(B)&:=\begin{bmatrix}S&0\\0&R\end{bmatrix}
		+(1-\gamma^{-2})^{-1}\begin{bmatrix}A^\top A&A^\top B\\B^\top A&|B|^2\end{bmatrix}\\
		\mathbf N(B)&:=\begin{bmatrix}A&B&-I\end{bmatrix}^\top
		\begin{bmatrix}A&B&-I\end{bmatrix}
	\end{align*}
	and let $\mathcal{M}$ denote the set of all pairs $(M,N)$ of the form 
	\begin{align*}
	(M,N)
	&=\theta_1\left(\mathbf M(B),\mathbf N(B)\right)
	+\theta_{-1}\left(\mathbf M(-B),\mathbf N(-B)\right)
\end{align*}
	with $B\in\mathcal{B}_+$, $1\le|B|\le\beta$, $\theta_{\pm1}\in[0,1]$ and $\theta_{-1}+\theta_1=1$. 
	Then
	\begin{align}
		&\min_{u\in\mathcal{U}}\max_{B\in\mathcal{B}}\mathcal{F}_uV_B(x,Z)\notag\\
		&=\min_{u\in\mathcal{U}}\max_{(M,N)\in\mathcal{M}}\averageE\left(|(x,u)|^2_{M}-\gamma^2\langle N,Z\rangle\right)\notag\\
		&=\max_{(M,N)\in\mathcal{M}}\min_{u\in\mathcal{U}}\averageE\left(|(x,u)|^2_{M}-\gamma^2\langle N,Z\rangle\right)\label{eqn:vonNeumann}\\
		&=\max_{B,\theta}\min_{u\in\mathcal{U}}\left[\theta_1\mathcal{F}_uV_B(x,Z)+\theta_{-1}\mathcal{F}_uV_{-B}(x,Z)\right]\notag\\
		&=\max_{B\in\mathcal{B}}\min_{u\in\mathcal{U}}\max\left\{\mathcal{F}_uV_B(x,Z),\mathcal{F}_uV_{-B}(x,Z)\right\}\label{eqn:Btheta}
	\end{align}
	where $\mathcal{U}$ is the set of random $u\in\realR$ with $\averageE u^2=\hat{K}x$.
	The equality \eqref{eqn:vonNeumann} is due to von Neumann's minimax theorem, which can be applied since $\averageE\left(|(x,u)|^2_{M}-\gamma^2\langle N,Z\rangle\right)$ is linear in the variables $\averageE u$ and $(M,N)$, while their constraints defined by $\mathcal{U}$ and $\mathcal{M}$ are convex and compact. 
	
	Two cases need to be considered:

\medskip\goodbreak

\noindent\textbf{Case~1 (Exploitation):} \eqref{eqn:exploit} holds.
\hfil\smallbreak
\noindent	
The assumption $(1-\gamma^{-2})\tilde{z}_{\hat{B}}\ge2\hat{K}x\hat{B}^\top Ax$ of \eqref{eqn:exploit} gives
\begin{align*}
	&\mathcal{F}_{\bar{u}}V_{-\hat{B}}(x,Z)\\
	&=\averageE\left(|x|^2_S+|\hat{K}x|^2_R+(1-\gamma^{-2})^{-1}|Ax+\hat{B}\hat{K}x|^2-z_{-\hat{B}}\right)\\
	&=\mathcal{F}_{\bar{u}}V_{\hat{B}}(x,Z)+\averageE\left(\frac{4 (Ax)^\top\hat{B}\hat{K}x}{1-\gamma^{-2}}+z_{\hat{B}}-z_{-\hat{B}}\right)\\
	&=\mathcal{F}_{\bar{u}}V_{\hat{B}}(x,Z)+\averageE\left(\frac{4\hat{K}x \hat{B}^\top Ax}{1-\gamma^{-2}}-2\tilde z_{\hat{B}}\right)\\
	&\le\mathcal{F}_{\bar{u}}V_{\hat{B}}(x,Z).\\[2mm]
	&\min_{u\in\mathcal{U}}\max_{B\in\mathcal{B}}\mathcal{F}_uV_B(x,Z)\\
	&=\max_{B\in\mathcal{B}}\min_{u}\max_{i=\pm1}\averageE
\left(|x|^2_S+|u|^2_R+\frac{|Ax+iBu|^2}{1-\gamma^{-2}}-z_{iB}\right)\\
	&=\min_{u}\max_{i=\pm1}\averageE
	\left(|x|^2_S+|u|^2_R+\frac{|Ax+i\hat{B}u|^2}{1-\gamma^{-2}}-z_{i\hat{B}}\right)\\
	&=\min_{u\in\mathcal{U}}\max_{i=\pm1}\mathcal{F}_uV_{i\hat{B}}(x,Z)\\
	&=\mathcal{F}_{\bar{u}}V_{\hat{B}}(x,Z)\\
	&=\min_{u\in\mathcal{U}}\averageE\left(|x|^2_S+|u|^2_R+(1-\gamma^{-2})^{-1}|Ax+{\hat{B}}u|^2-z_{\hat{B}}\right)\\
	&\le\averageE\left(|x|^2-z_{\hat{B}}\right)
\end{align*}
with $\bar{u}$ as minimizer. The proof of \eqref{eqn:Bellman} for Case~1 is complete.

\medskip

\noindent\textbf{Case~2 (Exploration):} \eqref{eqn:exploit} fails.
\hfil\smallbreak
\noindent	
	{\small\begin{align*}
	&\min_{u\in\mathcal{U}}\max_{i=\pm1}\mathcal{F}_uV_{iB}(x,Z)-\averageE\left(|x|^2_S+|\hat{K}x|^2_R+\frac{|Ax|^2+|B\hat{K}x|^2}{1-\gamma^{-2}}\right)\\
	&=\min_{u\in\mathcal{U}}\max_{i=\pm1}\averageE\left(\frac{2(Ax)^\top iBu}{1-\gamma^{-2}}-z_{iB}\right)\\
	&=\min_{u\in\mathcal{U}}\max_{i=\pm1}\averageE\left(-\frac{z_B+z_{-B}}{2}+i\left[\frac{2(Ax)^\top Bu }{1-\gamma^{-2}}+\tilde z_B\right]\right)\\
	&=-\averageE(z_B+z_{-B})/2.
	\end{align*}
	}The last equality is due to the Case~2 assumption that \eqref{eqn:exploit} fails. This means that minimization over $\averageE u$ subject to the constraint $|\averageE u|\le|\hat{K}x|$ cancels the term $i\left[\frac{2(Ax)^\top\hat{B}u }{1-\gamma^{-2}}+\tilde z_{\hat{B}}\right]$. Hence $\bar{u}$ as defined by $\mu(x,Z)$ is minimizing and
	\begin{align*}
	&\max_{B\in\mathcal{B}}\mathcal{F}_{\bar{u}}V_B(x,Z)\\
	&=\max_{B\in\mathcal{B}}\averageE\bigg(|x|^2_S+|\hat{K}x|^2_R+\frac{|Ax|^2+|B\hat{K}x|^2}{1-\gamma^{-2}}-\frac{z_{B}+z_{-B}}{2}\bigg)\\
	&=\max_{B\in\mathcal{B}}\averageE\bigg(|x|^2_S+\frac{2|Ax|^2}{1-\gamma^{-2}}-\frac{z_{B}+z_{-B}}{2}\bigg)\\
	&\le\averageE\big(\tau|x|^2-\bar{z}\big).
\end{align*}
The proof is for Case~2 also complete.
\end{pf}

\goodbreak

\begin{pf*}{Proof of Theorem~\ref{thm:Malpha}.}
	Let $Z_0:=0$ and 
	\begin{align*}
		Z_t&:=\sum_{k=0}^{t-1}(x_k,u_k,x_{k+1})(x_k,u_k,x_{k+1})^\top
		&&\text{for }t=1,\ldots T.
	\end{align*}
	Then
	{\begin{align*}
		&\hat{V}(x_{t+1},Z_{t+1})+\averageE|u_t|_R^2\\
		&=\hat{V}(x_{t+1},Z_t+(x_t,u_t,x_{t+1})(x_t,u_t,x_{t+1})^\top)+\averageE|u_t|_R^2\\
		&\le\max_v\hat{V}(v,Z_t+(x_t,u_t,v)(x_t,u_t,v)^\top)+\averageE|u_t|_R^2\\
		&=\max_v\hat{V}(v,Z_t+(x_t,\mu(x_t,Z_t),v)(x_t,\mu(x_t,Z_t),v)^\top)\\
		&\quad +\averageE|\mu(x_t,Z_t)|_R^2\\
		&\le\left[\hat{V}(x_t,Z_t)-\averageE|x_t|_S^2\right].
	\end{align*}
	}This gives the telescope sum
	\begin{align*}
		&\averageE\sum_{t=0}^{T}\left(|x_t|_S^2+|u_t|_R^2\right)\\
		&\le\sum_{t=0}^{T}\left[\hat{V}(x_t,Z_t)-\hat{V}(x_{t+1},Z_{t+1})\right]\\
		&=\hat{V}(x_0,Z_0)-\hat{V}(x_{T+1},Z_{T+1})\\
		&=-\hat{V}(x_{T+1},Z_{T+1})\\
		&\le\gamma^2\begin{bmatrix}A&B&-I\end{bmatrix}\left(\averageE Z_T\right)
		\begin{bmatrix}A&B&-I\end{bmatrix}^\top\\
		&=\gamma^2\averageE\sum_{t=0}^{T}|w_t|^2
	\end{align*}
	for all $B\in\mathcal{B}$ and the proof is complete.
\end{pf*}

\section{Acknowledgements}
The author is a member of the Excellence Center ELLIIT and Wallenberg AI, Autonomous Systems and Software Program (WASP). Support was received from the European Research Council (DualControl, Advanced Grant 101199738).


\section{Appendix}

\begin{pf*}{Proof of Lemma~\ref{lem:Bset}.}
\begin{align*}
	&\min_u\max_w\left(|x|^2_S+|u|^2_R-\gamma^2|w|^2+|Ax+Bu+w|^2\right)\\
	&=\min_u\left(|x|^2_S+|u|^2_R+\frac{|Ax+Bu|^2}{1-\gamma^{-2}}\right)\\
	&=|x|^2_S+\frac{|Ax|^2-(|B|^2+(1-\gamma^{-2})R)^{-1}(B^\top Ax)^2}{1-\gamma^{-2}},
\end{align*}
Define $\bar{R}:=(1-\gamma^{-2})R$. Then $\mathcal{B}$ is the set of matrices $B\in\realR^n$ such that $1\le|B|\le\beta$ and
\begin{align*}
	S+\frac{A^\top A-A^\top B(|B|^2+\bar{R})^{-1}B^\top A}{1-\gamma^{-2}}\preceq I,
\end{align*}
or equivalently 
\begin{align}
	(1-\gamma^{-2})(I-S)
	&\succeq A^\top\left[I- B(|B|^2+\bar{R})^{-1}B^\top\right]A\notag\\
	A^\top B(|B|^2+\bar{R})^{-1}B^\top A
	&\succeq A^\top A-(1-\gamma^{-2})(I-S)\label{eqn:1}
\end{align}
The left hand side of \eqref{eqn:1} has rank one, so $\mathcal{B}$ is empty unless $\lambda_2\le0$. Conversely, if $\lambda_2\le0$, then \eqref{eqn:1} is equivalent to
\begin{align*}
	U_1^\top A^\top B(|B|^2+\bar{R})^{-1}B^\top AU_1&\succeq \lambda_1.
\end{align*}
and $\mathcal{B}_+$ can be defined as the second order cone
\begin{align*}
	\left\{B:B^\top AU_1\ge\sqrt{\lambda_1(B^\top B+(1-\gamma^{-2})R)}\right\}.
\end{align*}
\end{pf*}

\end{document}